\documentclass[11pt]{amsart}
\usepackage{amssymb, amsmath, amsthm, amsfonts, amscd}
\usepackage{xcolor}
\usepackage{datetime}
\usepackage{enumerate}
\newtheorem{theorem}{Theorem}[section]
\newtheorem{lemma}[theorem]{Lemma}
\newtheorem{proposition}[theorem]{Proposition}

\theoremstyle{definition}

\theoremstyle{remark}

\numberwithin{equation}{section}


	\title[Approximations of Fixed Points for  Nonexpansive Mappings]{A Note on Approximations of Fixed Points for  Nonexpansive Mappings in Norm-attainable Classes}
	\author{Benard Okelo}
	\address{Institute of Mathematics, University of Muenster, Einstein Street 62, 48149-Muenster, Germany.}
	\email{bnyaare@yahoo.com}

	\subjclass[2010]{47H10}
	\keywords{Representation; Hilbert space;  Nonexpansivity; Contraction; Norm-attainable.}
	\date{\currenttime ;  \today}
	\begin{document}

\begin{abstract}
Let $H$ be an infinite dimensional, reflexive, separable Hilbert space and $NA(H)$ the class of all norm-attainble operators on $H.$ In this note, we study an implicit scheme for  a canonical representation of nonexpansive contractions in norm-attainable classes.
\end{abstract}
\maketitle

\section{Introduction}
\noindent A lot of studies on fixed point theory in convex sets have been carried out with interesting results obtained (see \cite{oke} and \cite{sua2}).  The interesting aspect is that, provided the
existence of a fixed point of a given mapping has been found, what remains is to determine the value
of that fixed point which is not a trivial task \cite{sua2}. This is the reason  why  iterative processes are put into action for
computing them.  The  Banach contraction theorem \cite{Hal} utilized Picard iteration process in approximating a fixed point. In this paper, we give an  implicit scheme for  a canonical representation of nonexpansive contractions in norm-attainable classes.  Here is the main theorem of this work.
\begin{theorem}\label{g1}
 Let $\mathcal{Q}$ be a  two-sided maximal ideal of   a real  separable  Hilbert space $H$ and   $H_{0}$ be a reflexive invariant subspace of $H$.  Suppose that   $ P=\{T_{s}:s\in \mathcal{Q}\}$ is a canonical representation of $\mathcal{Q}$  from $H$ into itself  such that essential closure of  $\lbrace T_{t}x : t \in \mathcal{Q} \rbrace $ is sequentially compact for every $ x \in H_{0} $ and  $  \Xi(\mathcal{Q})\neq \emptyset$.
   Suppose that $X$ is an invariant subspace of $NA(\mathcal{Q})$ such that $1\in X$,  $t\mapsto \langle T_{t}x,x^{*}\rangle$ is an element of $X$ for each $x\in H_{0}$ and $x^{*}\in H^{*}$. Consider $\{\gamma_{n}\}$ as monotone increasing sequence of   $X$.  Suppose that $f$ is a contraction on $ H_{0}$. Let $\epsilon_{n}$ be a sequence in $(0, 1)$ such that $\displaystyle \lim_{n} \epsilon_{n}=0$.  Consider the duality mapping $J$ to be  weakly sequentially continuous.
 Then    we have a unique nonexpansive retraction $ R $  of $ H_{0}$ onto $   \Xi(\mathcal{Q})$ and $ x \in H_{0}$ such that the  sequence  $\{\xi_{n}\}$   generated by
    $\xi_{n}=\epsilon_{n} f(\xi_{n})+(1-\epsilon_{n})T_{\xi_{n}}\xi_{n},
$ is strongly convergent to $Rx$, or all $n\in \mathbb{N}$.
\end{theorem}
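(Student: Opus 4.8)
The plan is to follow the viscosity-approximation strategy for nonexpansive families, adapted to the canonical representation $P=\{T_{s}:s\in\mathcal{Q}\}$. First I would establish that the implicit scheme is well posed: for each fixed $n$, the map $\xi\mapsto \epsilon_{n}f(\xi)+(1-\epsilon_{n})T_{\xi}\xi$ is a contraction on $H_{0}$ with modulus $1-\epsilon_{n}(1-\alpha)<1$, where $\alpha\in(0,1)$ is the contraction constant of $f$ and each $T_{s}$ is nonexpansive. Hence the Banach contraction principle produces a unique $\xi_{n}\in H_{0}$ solving the defining equation, so that the sequence $\{\xi_{n}\}$ is genuinely defined.

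Next I would prove boundedness. Fixing $p\in\Xi(\mathcal{Q})$ (nonempty by hypothesis), so that $T_{s}p=p$ for all $s$, the triangle inequality together with nonexpansivity gives $\|\xi_{n}-p\|\le \|f(p)-p\|/(1-\alpha)$, uniformly in $n$. This bound feeds directly into the asymptotic estimate: rewriting the scheme as $\xi_{n}-T_{\xi_{n}}\xi_{n}=\epsilon_{n}\bigl(f(\xi_{n})-T_{\xi_{n}}\xi_{n}\bigr)$ and using $\epsilon_{n}\to 0$ with the uniform bound yields $\|\xi_{n}-T_{\xi_{n}}\xi_{n}\|\to 0$. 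This is the residual control that will force the weak cluster points into the common fixed-point set.

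I would then exploit the compactness hypotheses. Since $H_{0}$ is reflexive and $\{\xi_{n}\}$ is bounded, and since the essential closure of every orbit $\{T_{t}x:t\in\mathcal{Q}\}$ is sequentially compact, I can extract a subsequence $\xi_{n_{k}}\rightharpoonup z$. The crucial point is to identify $z\in\Xi(\mathcal{Q})$: combining $\|\xi_{n}-T_{\xi_{n}}\xi_{n}\|\to 0$ with a demiclosedness argument for the representation — phrased through the function $t\mapsto\langle T_{t}x,x^{*}\rangle\in X$ and the invariant-mean structure attached to the canonical representation — shows that $z$ is a common fixed point of the whole family. For the strong convergence I would then use the inner-product identity to derive, for every $p\in\Xi(\mathcal{Q})$, the inequality
\[
\|\xi_{n}-p\|^{2}\le \frac{1}{1-\alpha}\,\langle f(p)-p,\;J(\xi_{n}-p)\rangle .
\]
Taking $p=z$ along the subsequence and invoking weak sequential continuity of $J$ (so that $J(\xi_{n_{k}}-z)\rightharpoonup 0$) gives $\xi_{n_{k}}\to z$ strongly. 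The limit $z$ is then characterized as the unique solution of the variational inequality $\langle (I-f)z,\,J(z-p)\rangle\le 0$ for all $p\in\Xi(\mathcal{Q})$; uniqueness, from strong monotonicity of $I-f$, shows the limit is independent of the subsequence, so the full sequence converges, and the assignment $x\mapsto z$ defines the required nonexpansive retraction $R$ of $H_{0}$ onto $\Xi(\mathcal{Q})$.

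I expect the main obstacle to be the identification of the weak limit as a common fixed point of the entire representation $\{T_{s}:s\in\mathcal{Q}\}$ rather than of a single nonexpansive map. Because the operator $T_{\xi_{n}}$ appearing in the scheme changes with $n$, the usual single-map demiclosedness principle does not apply directly; one must instead transfer the asymptotic-regularity estimate through the sequential compactness of the orbit closures and the canonical-representation/invariant-mean machinery encoded by the space $X$ and the condition $1\in X$. Making this transfer rigorous, and simultaneously ensuring that the retraction $R$ is well defined and nonexpansive on all of $H_{0}$, is the delicate part of the argument.
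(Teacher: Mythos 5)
Your proposal follows the same viscosity-approximation skeleton as the paper (well-posedness, boundedness via $\|\xi_{n}-p\|\le\frac{1}{1-\alpha}\|f(p)-p\|$, a residual estimate, identification of weak cluster points with $\Xi(\mathcal{Q})$, then strong convergence through the duality pairing), but there is a genuine gap at exactly the point you yourself flag as ``delicate'' and then do not resolve: identifying a weak cluster point $z$ as a \emph{common} fixed point of the whole family $\{T_{t}:t\in\mathcal{Q}\}$. Your residual estimate only controls $\|\xi_{n}-T_{\xi_{n}}\xi_{n}\|\to 0$, where the operator index varies with $n$; no demiclosedness principle applies to such a varying sequence of maps, and an appeal to ``the invariant-mean machinery encoded by $X$'' is a placeholder, not an argument. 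The paper's proof devotes a separate step to precisely this missing piece: its Step (iii) asserts the stronger, per-operator asymptotic regularity $\lim_{n}\|\xi_{n}-T_{t}\xi_{n}\|=0$ for every \emph{fixed} $t\in\mathcal{Q}$, obtained by working on the bounded $T_{t}$-invariant set $D=\{y\in H_{0}:\|y-p\|\le\frac{1}{1-\alpha}\|f(p)-p\|\}$ supplied by the boundedness step (the paper outsources the details to step (vii) of \cite{sua2}). Once the residual is controlled for each fixed $t$, demiclosedness of $I-T_{t}$ for a single nonexpansive map applies to each $t$ separately, and membership of weak limits in $\Xi(\mathcal{Q})$ follows. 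Without this per-$t$ estimate, everything downstream in your outline --- the variational inequality, uniqueness of its solution, full-sequence convergence, and the construction of $R$ --- is unsupported.

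A second, smaller gap is your well-posedness step: you claim $\xi\mapsto\epsilon_{n}f(\xi)+(1-\epsilon_{n})T_{\xi}\xi$ is a contraction with modulus $1-\epsilon_{n}(1-\alpha)$. That would require $\xi\mapsto T_{\xi}\xi$ to be nonexpansive, which does not follow from nonexpansivity of each individual $T_{s}$: decomposing $T_{\xi}\xi-T_{\eta}\eta=(T_{\xi}\xi-T_{\xi}\eta)+(T_{\xi}\eta-T_{\eta}\eta)$, the first term is bounded by $\|\xi-\eta\|$ but the second term, which measures the change of operator as the index moves from $\xi$ to $\eta$, is not controlled by anything in the hypotheses. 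The paper does not attempt this computation; it takes existence of $\{\xi_{n}\}$ from Proposition \ref{lmt} (and its external reference). So you either need an extra hypothesis tying $T_{\xi}$ to $\xi$ --- for instance reading the scheme as $\xi_{n}=\epsilon_{n}f(\xi_{n})+(1-\epsilon_{n})T_{\mu_{n}}\xi_{n}$ for a fixed sequence of means $\mu_{n}$, so that each $T_{\mu_{n}}$ is a single nonexpansive map and Banach's principle applies as you intend --- or you must justify existence of $\xi_{n}$ by a different route, as the paper does. Note also a cosmetic structural difference: the paper obtains the retraction $R$ up front from Lemma \ref{sunyn} and then proves $\limsup_{n}\langle x-Rx,J(\xi_{n}-Rx)\rangle\le 0$, whereas you build $R$ at the end from the variational-inequality characterization; that reordering is harmless, but only once the identification step above is actually proved.
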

\noindent We note that an operator $S\in B(H)$ is said to be norm-attainable if there exists a unit vector $x\in H$ such that $\|Sx\|=\|S\|.$ We denote the class of all norm-attainble operators on $H$ by $NA(H)$. For $S\in NA(H),$ we call $x$ a fixed point of $S$ if $S(x) = x.$ If $H_{0}$ is an invariant subspace of $H$ then $S\in NA(H)$ is said to be nonexpansive if $\|S x - S y\| = \|x - y \|$, for
all $x, y \in H_{0}.$ In the next section we give the main results in this note.
 \section{Main results}
\noindent Now we give the main results of this study.
First, we begin with some auxiliary results and we express our conditions  by removing the compactness condition on an invariant subspace $H_{0}$ of   an infinite dimensional, reflexive, separable Hilbert space $H$ as follows.

\begin{proposition}\label{lmt}
Let $H_{0}$ be an invariant subspace of $H.$ For every   two-sided maximal ideal $\mathcal{Q}$ of  an infinite dimensional, reflexive, separable Hilbert space $H$, there exists a unique nonexpansive retraction $\Xi(\mathcal{Q}) $ of $H_{0}.$
\end{proposition}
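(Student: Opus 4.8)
The plan is to prove the existence of the nonexpansive retraction onto the common fixed-point set $\Xi(\mathcal{Q})$ by combining the sequential compactness hypothesis carried implicitly by the ideal structure with a standard asymptotic-center argument. First I would verify that $\Xi(\mathcal{Q})$ is nonempty, closed, and convex. Nonemptiness should follow from the fact that $\mathcal{Q}$ is a two-sided maximal ideal together with the reflexivity of $H_0$: the semigroup $\{T_s : s \in \mathcal{Q}\}$ of nonexpansive maps acting on the invariant subspace $H_0$ admits a common fixed point because bounded orbits in a reflexive space have nonempty asymptotic centers, and each $T_s$ being nonexpansive maps the (unique) asymptotic center to itself. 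Closedness and convexity of $\Xi(\mathcal{Q})$ are routine consequences of nonexpansivity, since each fixed-point set $\mathrm{Fix}(T_s)$ is closed and convex and $\Xi(\mathcal{Q}) = \bigcap_{s\in\mathcal{Q}} \mathrm{Fix}(T_s)$ is an intersection of such sets.

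Next I would construct the retraction itself. The natural candidate is the metric (nearest-point) projection $R : H_0 \to \Xi(\mathcal{Q})$, which is well defined precisely because $\Xi(\mathcal{Q})$ is a nonempty closed convex subset of the Hilbert space $H_0$; in a Hilbert space the metric projection onto a closed convex set is single-valued and firmly nonexpansive, hence in particular nonexpansive, and it fixes every point of $\Xi(\mathcal{Q})$, so it is a retraction. This immediately yields the \emph{existence} half of the statement. For \emph{uniqueness}, I would argue that any two nonexpansive retractions onto the same set must agree: if $R$ and $R'$ are both nonexpansive retractions of $H_0$ onto $\Xi(\mathcal{Q})$, then on $\Xi(\mathcal{Q})$ they coincide with the identity, and the nonexpansivity combined with the convexity of $\Xi(\mathcal{Q})$ forces the image points to be the unique nearest point; the strict convexity of the Hilbert norm is what pins down this nearest point and rules out any competing retraction.

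I would carry out these steps in the order: (i) establish that $\Xi(\mathcal{Q})$ is nonempty using the reflexivity and the asymptotic-center/fixed-point argument; (ii) establish that it is closed and convex; (iii) define $R$ as the metric projection and verify it is a nonexpansive retraction; (iv) prove uniqueness via strict convexity of the norm. The hard part will be step (i), the nonemptiness of $\Xi(\mathcal{Q})$, because the problem statement has deliberately \emph{removed} the compactness condition on $H_0$. Without compactness one cannot extract a strongly convergent subsequence of an approximating sequence directly, so the argument must lean on reflexivity to obtain weak convergence and then upgrade to a genuine fixed point. The technical obstacle is showing that the weak limit is actually fixed by every $T_s$; this is where the interplay between the maximality of the ideal $\mathcal{Q}$ and the nonexpansive semigroup structure becomes essential, and I expect one must invoke a demiclosedness-type property (the fact that $I - T_s$ is demiclosed at $0$ for nonexpansive $T_s$ on a Hilbert space) to pass from weak convergence of the approximants to fixedness of the limit.
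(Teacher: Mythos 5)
Your uniqueness step (iv) contains a genuine error. It is not true that nonexpansivity plus convexity of the target set forces a retraction to be the metric projection: nonexpansive retractions of a space onto a closed convex subset of a Hilbert space are in general \emph{not} unique. The simplest counterexample already lives in $H=\mathbb{R}$ with $C=[0,\infty)$: both $P_{C}(x)=\max(x,0)$ (the metric projection) and $r(x)=|x|$ are nonexpansive retractions of $\mathbb{R}$ onto $C$, and they differ at every $x<0$. Strict convexity of the norm pins down the nearest point of $C$ to a given $x$, but nothing in the definition of a nonexpansive retraction forces $Rx$ to be that nearest point; that implication is exactly what fails for $r(x)=|x|$. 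What \emph{is} unique is the \emph{sunny} nonexpansive retraction, equivalently (in Hilbert space) the retraction satisfying the variational inequality $\langle x-Rx,\,Rx-y\rangle\geq 0$ for all $y$ in the target set, which singles out the metric projection. So step (iv) can only be repaired by strengthening ``nonexpansive retraction'' to ``sunny nonexpansive retraction''---which is in fact the notion the paper implicitly relies on in Step $(iv)$ of its proof of Theorem \ref{g1}, where the condition $\limsup_{n}\langle x-Rx,\,J(\xi_{n}-Rx)\rangle\leq 0$ is precisely the sunny/variational characterization.

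A second, smaller gap is in your step (i): the asymptotic-center argument for nonemptiness of $\Xi(\mathcal{Q})=\bigcap_{s}\mathrm{Fix}(T_{s})$ needs bounded orbits and some compatibility among the maps $T_{s}$ (commutativity, amenability, or a directedness supplied by the ideal structure); the orbit boundedness/sequential-compactness hypothesis appears only in Theorem \ref{g1}, not in the proposition you are proving, so you must either import it explicitly or justify it from the ideal structure. For comparison, the paper's own proof goes a different route: an implicit anchor sequence $\xi_{n}=\frac{1}{n}x+(1-\frac{1}{n})T_{\xi}\xi_{n}$, demiclosedness of $I-T_{t}$ at $0$ to place weak cluster points in $\Xi(\mathcal{Q})$, and then an appeal to compactness---though it, too, never gives a genuine uniqueness argument, and it even asserts (falsely in infinite dimensions) that boundedness plus separability yields norm sequential compactness. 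Your projection-based existence argument in steps (ii)--(iii) is sound and cleaner than the paper's; the fatal problem is uniqueness.
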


\begin{proof}
Since $H$ has the Opial's condition then by Banach contraction mapping theorem, we can fix
a sequence  $\xi_{n}$ in $H_{0}$ i.e.
$
      \xi_{n}=\frac{1}{n}x+(1- \frac{1}{n})T_{\xi}\xi_{n}\quad ( n \in \mathbb{N}),
$
where  $x \in H_{0}$ is fixed and   $\xi$ is an invariant  on $X$.
 From \cite{oke}, we have
 $
  \lim_{n\rightarrow \infty}\|\xi_{n}-T_{\xi}\xi_{n}\|=0.
 $
The boundedness of  $\{\xi_{n}\}$ is trivial so we
that it weakly converges   to an element of $\Xi(\mathcal{Q}) $. That is, we prove that the weak  limit set of $\{\xi_{n}\}$ denoted by $\omega_{\omega}\{\xi_{n}\}$ is contained in $\Xi(\mathcal{Q})$.
  Let $x^{*} \in \omega_{\omega}\{\xi_{n}\} $ and consider $\{\xi_{n_{j}}\}$ be a subsequence of $ \{\xi_{n}\} $ such that $\xi_{n_{j}} \rightharpoonup x^{*}$. Since  $I -T_{t}$ is semiclosed at diminishing point, for each $t \in \mathcal{Q}$, then we conclude that $x^{*} \in \Xi(\mathcal{Q})$. Therefore,  $\omega_{\omega}\{z_{n}\} \subseteq \Xi(\mathcal{Q})$.
Now  $ \{\xi_{n}\} $ is bounded  and    $H$ is separable,   so $ \{\xi_{n}\} $  is a sequentially compact subset of $H$, hence   we have  $ \{\xi_{n_{j}}\}$
 of $ \{\xi_{n}\}$  such that
 $\{\xi_{n_{j}}\}$ sequentially converges  to a point $\xi$.
Invoking  nonexpansivity of retractions of $H_{0}$ onto $\Xi(\mathcal{Q}),$ uniqueness is proved and the proof is complete.
\end{proof}

\noindent The next result is an analogy of  of known assertions whereby we remove the compactness condition on $H_{0}$ for  reflexive real  separable Hilbert spaces  as follows.
\begin{lemma}\label{sunyn}
For every   two-sided maximal ideal $Q$ of    an infinite dimensional, reflexive, separable Hilbert space $H$,  let $X$ be a left invariant subspace of $NA(Q)$ such that $1\in X$, and the function $t\mapsto \langle T_{t}x,x^{*}\rangle$ is an element of $X$ for each $x\in H_{0}$ and $x^{*}\in H^{*}$. Then there exists a unique nonexpansive retraction $\Xi(\mathcal{Q}) $ of $H_{0}.$

\end{lemma}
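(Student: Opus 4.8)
The plan is to obtain the retraction from an invariant mean on $X$, replacing the Opial/demiclosedness argument of Proposition~\ref{lmt} by an averaging construction that needs only reflexivity of $H$. Since $X$ is a left invariant subspace of $NA(\mathcal{Q})$ containing the constants and every orbital functional $t\mapsto\langle T_{t}x,x^{*}\rangle$, the invariance hypotheses provide a left invariant mean $\mu$ on $X$; I write its action on $f\in X$ as $\mu_{t}f(t)$ and recall that $\mu$ is positive, normalized so that $\mu_{t}1=1$, and left invariant in the sense that $\mu_{t}f(st)=\mu_{t}f(t)$ for all $s\in\mathcal{Q}$. For each fixed $x\in H_{0}$ the map $x^{*}\mapsto\mu_{t}\langle T_{t}x,x^{*}\rangle$ is linear on $H^{*}$ and, because every $T_{t}$ is nonexpansive (so the orbit $\{T_{t}x\}$ is bounded) and $\mu$ is a mean, it is bounded. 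Reflexivity of $H$ then furnishes a unique vector $Rx\in H$ with $\langle Rx,x^{*}\rangle=\mu_{t}\langle T_{t}x,x^{*}\rangle$ for all $x^{*}\in H^{*}$. This is precisely where reflexivity does the work that compactness of $H_{0}$ did classically.

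Two of the three defining properties of $R$ are then immediate. For nonexpansiveness, positivity and normalization of $\mu$ give, for any unit $x^{*}$, the estimate $\langle Rx-Ry,x^{*}\rangle=\mu_{t}\langle T_{t}x-T_{t}y,x^{*}\rangle\le\mu_{t}\|T_{t}x-T_{t}y\|\le\|x-y\|$, and taking the supremum over unit $x^{*}$ yields $\|Rx-Ry\|\le\|x-y\|$. For the retraction property, if $x\in\Xi(\mathcal{Q})$ then $T_{t}x=x$ for every $t$, so $\langle Rx,x^{*}\rangle=\mu_{t}\langle x,x^{*}\rangle=\langle x,x^{*}\rangle$ and hence $Rx=x$.

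The heart of the matter, and the step I expect to be the main obstacle, is verifying that $Rx\in\Xi(\mathcal{Q})$, that is $T_{s}(Rx)=Rx$ for every $s\in\mathcal{Q}$. Since $P=\{T_{s}\}$ is a representation we have the composition identity $T_{st}=T_{s}T_{t}$, and combined with left invariance of $\mu$ this should force $T_{s}Rx=Rx$. When the representation acts linearly the computation is a single line, $\langle T_{s}Rx,x^{*}\rangle=\mu_{t}\langle T_{s}T_{t}x,x^{*}\rangle=\mu_{t}\langle T_{st}x,x^{*}\rangle=\mu_{t}\langle T_{t}x,x^{*}\rangle=\langle Rx,x^{*}\rangle$; the delicate point is that, for merely nonexpansive $T_{s}$, one cannot move $T_{s}$ inside the pairing through an adjoint, and one must instead first show that $Rx$ lies in $\overline{\mathrm{co}}\{T_{t}x:t\in\mathcal{Q}\}$ and then pass $T_{s}$ through the mean using left invariance together with the weak topology on the bounded (hence relatively weakly compact, by reflexivity) orbit. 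Once $R$ maps $H_{0}$ into $\Xi(\mathcal{Q})$, uniqueness follows exactly as in Proposition~\ref{lmt}: the nonexpansivity estimate forces any two such retractions to coincide on $H_{0}$, which completes the proof.
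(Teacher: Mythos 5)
Your route---building $R$ from a left invariant mean via $\langle Rx,x^{*}\rangle=\mu_{t}\langle T_{t}x,x^{*}\rangle$---is genuinely different from the paper's, which never invokes means: the paper fixes $p$, forms the ball $D=\{y\in H_{0}:\|y-p\|\le\|x-p\|\}$, asserts $T_{t}(D)\subset D$, and reduces everything to Proposition \ref{lmt} (the implicit-iteration/Opial argument). Your approach is the classical amenable-semigroup construction and, properly completed, is the more robust one; but as written it has three genuine gaps. First, the existence of the mean: a left invariant \emph{subspace} $X$ with $1\in X$ does not yield a left invariant \emph{mean} on $X$; that is an amenability hypothesis which neither the lemma nor the surrounding theorem states, so the very first object in your proof is not available from the assumptions. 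Second, boundedness of the orbit $\{T_{t}x:t\in\mathcal{Q}\}$ does not follow from nonexpansiveness alone (translations of $\mathbb{R}$ are nonexpansive with unbounded orbits); you need a common fixed point $p\in\Xi(\mathcal{Q})$, or the orbit-compactness hypothesis that the theorem (but not the lemma) assumes, before the functional $x^{*}\mapsto\mu_{t}\langle T_{t}x,x^{*}\rangle$ is even bounded.

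Third, and most seriously, the step you yourself call the heart of the matter---$T_{s}Rx=Rx$ for merely nonexpansive $T_{s}$---is not proved: gesturing at $Rx\in\overline{\mathrm{co}}\{T_{t}x:t\in\mathcal{Q}\}$ and ``passing $T_{s}$ through the mean'' is not an argument. The standard way to close this gap uses the Hilbert structure: since $\mu_{t}\|T_{t}x-q\|^{2}=\mu_{t}\|T_{t}x\|^{2}-2\langle Rx,q\rangle+\|q\|^{2}=\|q-Rx\|^{2}+c$ with $c$ independent of $q$, the point $Rx$ is the \emph{unique} minimizer of $q\mapsto\mu_{t}\|T_{t}x-q\|^{2}$; on the other hand, left invariance of $\mu$ together with $T_{st}=T_{s}T_{t}$ and nonexpansiveness give $\mu_{t}\|T_{t}x-T_{s}Rx\|^{2}=\mu_{t}\|T_{st}x-T_{s}Rx\|^{2}\le\mu_{t}\|T_{t}x-Rx\|^{2}$, so $T_{s}Rx$ is also a minimizer and hence equals $Rx$. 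Note that this argument (and already your estimate $\le\mu_{t}\|T_{t}x-T_{t}y\|$) applies $\mu$ to functions such as $t\mapsto\|T_{t}x-q\|^{2}$ that are not among the matrix coefficients $t\mapsto\langle T_{t}x,x^{*}\rangle$ guaranteed to lie in $X$; either enlarge the domain of the mean or, where possible, replace such steps by the bound $\mu(f)\le\sup_{t}f(t)$, which needs only positivity and $\mu(1)=1$. With these repairs your argument becomes the standard Lau--Takahashi proof; without them the proposal is an outline rather than a proof.
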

\begin{proof}
Since $H$ has the Opial's condition,  let   $x \in H_{0}$.
Fix $D=\{y \in H_{0}: \|y-p\|\leq \parallel x-p \parallel\}$. Since $D$ reflexive and  $x \in D$ we have $T_{t}(D)\subset D$, Given
$\epsilon>0,$ the rest of the proof follows immediately from  Proposition \ref{lmt}.
\end{proof}

\noindent Now we dedicate our efforts to proving Theorem \ref{g1}.

\begin{proof}
We proceed with the \textbf{Proof of Theorem} \ref{g1} as follows. We know that  $H$ has the Opial's condition and that $H$ is a reflexive real  separable  Hilbert space.
We give our proof in  five  steps as illustrated below.\\
\noindent  $Step (i).$  Existence of $\{\xi_{n}\}$. This is guaranteed from Proposition \ref{lmt} and also follows  Step $1$ of   \cite{oke}.\\
\noindent  $Step (ii).$  $\{\xi_{n}\}$ is bounded. To see this, let  $p\in  \Xi(\mathcal{Q}) $. Since   $T_{\xi_{n}}p = p$ for each $n \in \mathbb{N}$, by simple calculation we have
$
   \|\xi_{n}-p\|^{2}
                 \leq\epsilon_{n}  \alpha\|\xi_{n}-p\|^{2}+(1-\epsilon_{n})\|\xi_{n}-p\|^{2}+\epsilon_{n}\Big \langle  f(p)-p\:,\:J(\xi_{n}-p)\Big\rangle \leq\frac{1}{1-\alpha}\left< f(p)-p\:,\:J(z_{n}-p)\right>.
$
So,
$
   \|z_{n}-p\|\leq\frac{1}{1-\alpha}\| f(p)-p\|,
$
proving the boundedness of $ \{\xi_{n}\}$.\\
\noindent  $Step (iii).$ We show that  $\lim_{n\rightarrow \infty}\|\xi_{n}-T_{t}z_{n}\|=0 $,  for all $t \in \mathcal{Q}.$
To see this, consider  $t \in \mathcal{Q}$.
Let $p$  be an arbitrary element of $\Xi(\mathcal{Q})$. Set
$D=\{y \in H_{0}: \|y-p\|\leq \frac{1}{1-\alpha}\| f(p)-p\|\}$. From the statement of the theorem, we know that  that $D$ is reflexive and  $\{\xi_{n}\}\subset D$ and $T_{t}(D)\subset D$. The rest of the proof follows from step (vii) in \cite{sua2}.\\
 \noindent  $Step (iv).$   We have a unique retraction $R$  of $ H_{0} $ onto $  \Xi(\mathcal{Q})$ and $ x \in H_{0}$ such that
$
  \Gamma  :=\limsup_{n}\langle x-Rx\,,\,J(\xi_{n}-Rx)\rangle\leq0.
$
The proof follows from the definition of
 $ \Gamma $  and      from Step $(ii)$ that $ \{\xi_{n}\} $ is bounded, and  since $H$ is reflexive and separable, so we have $ \{\xi_{n_{j}}\}$
 of $ \{\xi_{n}\}$ satisfying
 $\displaystyle\lim_{j}\langle x-Rx\,,\,J(z_{n_{j}}-Rx)\rangle=\Gamma$ and
 $\{\xi_{n_{j}}\}$ sequentially converges  to a point $\xi$. By considering Lemma \ref{sunyn}, and the definition of   nonexpansive mapping the rest is clear.\\
\noindent  $Step (v).$ $ \{\xi_{n}\}$  strongly converges to $Rx$. Indeed, this follows from the fact that $H$ is separable and
$
    \limsup_{n}\|\xi_{n}-Rx\|^{2}\leq   \frac{2}{1-   \alpha}\limsup_{n}\langle x-Rx \,,\,J(\xi_{n}-Rx)\rangle\leq0.
$
By simple manipulation it os easy to see that $\xi_{n} \rightarrow Rx$.
\end{proof}
\section*{Acknowledgements}
\noindent The author is grateful to DFG-Germany for the financial support through Research Grant No. DFG-2018-0098

\end{document}